\def\C{\mathbb{C}}
\def\RR{\boldsymbol{R}}
\def\BB {\boldsymbol{B}}
\def\qq{\boldsymbol{q}}
\def\rr{\boldsymbol{r}}
\def\sss{\boldsymbol{s}}
\def\BB {\boldsymbol{B}}
\def\AAA {\boldsymbol{A}}
\def\xx{\boldsymbol{x}}
\def\zz{\boldsymbol{z}}
\def\dd{\boldsymbol{d}}
\def\ee{\boldsymbol{e}}
\def\ff{\boldsymbol{f}}
\def\yy{\boldsymbol{y}}
\def\bb{\boldsymbol{b}}
\def\ee{\boldsymbol{e}}
\def\uu{\boldsymbol{u}}
\def\ww{\boldsymbol{w}}
\def\cc{\boldsymbol{c}}
\def\UU{\boldsymbol{U}}
\def\GG{\boldsymbol{G}}
\def\PP{\boldsymbol{P}}
\def\QQ{\boldsymbol{Q}}
\def\RR{\boldsymbol{R}}
\def\TT{\boldsymbol{T}}
\def\MM{\boldsymbol{M}}
\def\aaa{\boldsymbol{a}}
\def\hh{\boldsymbol{h}}
\def\II{\boldsymbol{I}}
\def\00{\boldsymbol{0}}
\def\11{\mathds{1}}
\def\ggamma{\mbox{\boldmath{$\gamma$}}}
\def\rrho{\mbox{\boldmath{$\rho$}}}
\def\xxi{\mbox{\boldmath{$\xi$}}}
\def\eeta{\mbox{\boldmath{$\eta$}}}
\newcommand{\off}[1]{}
\def\MPE{\text{\it{\tiny{MPE}}}}
\def\RRE{\text{\it{\tiny{RRE}}}}
\def\FOM{\text{\it{\tiny{FOM}}}}
\def\GMR{\text{\it{\tiny{GMR}}}}
\newtheorem{theorem}{Theorem}
\newtheorem{lemma}[theorem]{Lemma}
\newenvironment{proof}{\noindent{\bf{Proof.\/}}}{\hfill$\blacksquare$\vskip0.1in}
\newcommand{\beq}{\begin{equation}}
\newcommand{\eeq}{\end{equation}}
\newcommand{\ben}{\begin{enumerate}}
\newcommand{\een}{\end{enumerate}}
\newcommand{\sem}[1]{[ \hspace{-0.02 in} [#1] \hspace{-0.02in} ]}
\begin{document}
\title
{\bf    Minimal Polynomial and Reduced Rank  Extrapolation Methods Are Related}
\author
{Avram Sidi\\
Computer Science Department\\
Technion - Israel Institute of Technology\\ Haifa 32000, Israel\\
e-mail:{~~~}\url{asidi@cs.technion.ac.il}\\
URL:{~~~}\url{http://www.cs.technion.ac.il/~asidi}}
\date{
Appeared in: \ {\em Advances in Computational Mathematics}, 43:151--170, 2017.}
\maketitle
\vspace{-2cm}

\thispagestyle{empty}
\newpage

\begin{abstract}\sloppypar\noindent
Minimal Polynomial Extrapolation (MPE) and Reduced Rank Extrapolation (RRE) are two polynomial methods used for accelerating the  convergence  of  sequences of vectors $\{\xx_m\}$. They are applied successfully in conjunction with fixed-point iterative schemes in the solution of large and sparse systems of linear and  nonlinear equations in different disciplines  of science and engineering.
Both methods produce approximations $\sss_k$ to the limit or antilimit of $\{\xx_m\}$ that are of the form $\sss_k=\sum^k_{i=0}\gamma_i\xx_i$ with $\sum^k_{i=0}\gamma_i=1$, for some scalars $\gamma_i$. The way the two methods are derived suggests that they might,   somehow,  be related to each other; this has not been explored so far, however.
In this work, we tackle this issue and show  that  the vectors $\sss_{k}^\MPE$ and  $\sss_k^\RRE$ produced by the two methods
are related in more than one way,   and independently  of the way the $\xx_m$ are generated.
One of our results states that  RRE stagnates, in the sense that $\sss_k^\RRE=\sss_{k-1}^\RRE$, if and only if  $\sss_{k}^\MPE$ does not exist. Another result states that, when
$\sss_{k}^\MPE$ exists, there holds
$$\mu_k\sss_k^\RRE=\mu_{k-1}\sss_{k-1}^\RRE+
 \nu_k\sss_{k}^\MPE\quad \text{with}\quad \mu_k=\mu_{k-1}+\nu_k,$$ for some positive scalars $\mu_k$, $\mu_{k-1}$,  and $\nu_k$ that depend only on $\sss_k^\RRE$, $\sss_{k-1}^\RRE$, and $\sss_{k}^\MPE$,
 respectively. Our results are valid when MPE and RRE are defined in any weighted inner product and the norm induced by it.
 They  also contain as special cases the known results  pertaining to the  connection between the method of Arnoldi and the method of generalized minimal residuals, two important Krylov subspace methods for solving nonsingular linear systems.
\end{abstract}

\vspace{1cm} \noindent {\bf Mathematics Subject Classification
2000:}  65B05, 65F10, 65F50,  65H10.

\vspace{1cm} \noindent {\bf Keywords and expressions:} Vector extrapolation methods,
minimal polynomial extrapolation (MPE), reduced rank extrapolation (RRE), Krylov subspace methods,
method of  Arnoldi, method of generalized minimal residuals, GMRES.

\thispagestyle{empty}
\newpage
\pagenumbering{arabic}

\section{Introduction}
\label{se1}
{\em Minimal Polynomial Extrapolation} (MPE) of Cabay and Jackson \cite{Cabay:1976:PEM} and {\em Reduced Rank Extrapolation} (RRE) of Kaniel and Stein \cite{Kaniel:1974:LSA}, Eddy  \cite{Eddy:1979:ELV}, and
Me{\u{s}}ina \cite{Mesina:1977:CAI}
are two polynomial methods
of {\em convergence acceleration} or {\em extrapolation} for sequences of vectors.\footnote{The formulations of RRE given in Kaniel and Stein \cite{Kaniel:1974:LSA} and Me{\u{s}}ina
\cite{Mesina:1977:CAI}
 are essentially the same, but they  are  entirely different  from that in Eddy
 \cite{Eddy:1979:ELV}. The  mathematical equivalence of the different formulations is shown in Smith, Ford, and Sidi in \cite{Smith:1987:EMV}.}
They have been used successfully
in different areas of science and engineering
in accelerating the convergence of sequences
that arise, for example,  from application of fixed-point iterative  schemes to large and sparse
linear or nonlinear systems of equations.

These methods and others were  reviewed by Smith, Ford, and Sidi \cite{Smith:1987:EMV}, Sidi, Ford, and Smith \cite{Sidi:1986:ACV} and, more recently, by Sidi \cite{Sidi:2012:RTV}. Their convergence and stability properties were analyzed in the papers by  Sidi \cite{Sidi:1986:CSP}, \cite{Sidi:1994:CIR}, Sidi and Bridger \cite{Sidi:1988:CSA}, and Sidi and Shapira \cite{Sidi:1991:UBC}, \cite{Sidi:1998:UBC}.  Their connection with known Krylov subspace methods for the solution of linear systems of equations was explored in
Sidi \cite{Sidi:1988:EVP}. In Ford and Sidi \cite{Ford:1988:RAV}, they were shown to satisfy certain interesting   recursion relations.  Efficient algorithms for their  implementation that are stable numerically  and economical computationally and storagewise were designed in Sidi \cite{Sidi:1991:EIM}.
Finally, Chapter 4 of the book  by Brezinski and Redivo Zaglia \cite{Brezinski:1991:EMT} is devoted completely to vector extrapolation methods (including MPE and RRE) and their various properties.

 From the way they are derived, one might suspect that MPE and RRE are somehow related. Despite being intriguing and of interest in itself, this  subject   has not been investigated  until now, however. In this work, we undertake precisely this investigation
and show that the two methods are indeed very closely related in more than one way. A partial  description of the results of this investigation are given in the next paragraph.

Let $\{\xx_m\}$ be an {\em arbitrary} sequence of vectors in $\C^N$ endowed with a general {\em weighted} (not necessarily standard Euclidean) inner product and the norm induced by it,  and let $\sss_k^\MPE$ and $\sss_k^\RRE$ be the vectors (approximations to $\lim_{m\to\infty}\xx_m$ when this limit exists, for example) produced by MPE and RRE from the $k+2$ vectors $\xx_0,\xx_1,\ldots,\xx_{k+1}$.
It is known that $\sss_k^\RRE$ always exists, but $\sss_k^\MPE$ may not always exist. One of our results states that,  RRE stagnates, in the sense that \beq\label{eq0}\sss_k^\RRE=\sss_{k-1}^\RRE\quad \Leftrightarrow\quad\sss_{k}^\MPE\ \text{does not exist.}\eeq Another result states that, when
$\sss_{k}^\MPE$ exists, there holds
\beq\label{eq00}\mu_k\sss_k^\RRE=\mu_{k-1}\sss_{k-1}^\RRE+
 \nu_k\sss_{k}^\MPE\quad \text{with}\quad \mu_k=\mu_{k-1}+\nu_k,\eeq for some positive scalars $\mu_k$, $\mu_{k-1}$,  and $\nu_k$ that depend  only on $\sss_k^\RRE$, $\sss_{k-1}^\RRE$, and $\sss_{k}^\MPE$,
 respectively. The precise results and the conditions under which they hold will be given in the next sections.\footnote{Throughout this work, we will use boldface lowercase letters to denote column vectors. In particular, we will denote the zero column vector by $\00$. Similarly,  we will use boldface upper case letters to denote matrices.}

 When the sequence $\{\xx_m\}$ is generated from a {\em  linear} singular system of equations $\xx=\TT\xx+\dd$ via the fixed-point iterative scheme
 $\xx_{m+1}=\TT\xx_m+\dd,$ $m=0,1,\ldots,$ starting with some initial vector $\xx_0$, the vectors $\sss_k^\MPE$ and
  $\sss_k^\RRE$ are precisely those generated by, respectively, the {\em Full Orthogonalization Method} (FOM)   and the method of {\em Generalized Minimal  Residuals} (GMR)---two important Krylov subspace methods for solving linear systems---as these are being applied to the linear system $(\II-\TT)\xx=\dd$, starting with $\xx_0$ as the initial approximation to the solution.
  This is so provided  all four methods are defined using the same weighted inner product and the norm induced by it.\footnote{MPE and RRE were originally defined in $\C^N$ with the standard Euclidean inner product and the norm induced by it.  In subsequent work  by the author and his co-authors, their definitions were generalized by allowing  general inner products and  norms.
The algorithms for implementing MPE and RRE given in \cite{Sidi:1991:EIM} still use the standard Euclidean inner product and the  norm induced by it, however.}

FOM was developed by Arnoldi \cite{Arnoldi:1951:PMI}, who also presented a very elegant algorithm, which employs an interesting process called the {\em Arnoldi--Gram--Schmidt process},  for computing an orthonormal basis for a Krylov subspace. For a discussion of FOM and more, see also Saad \cite{Saad:1981:KSM}.  Different algorithms were given for GMR by Axelsson \cite{Axelsson:1980:CGT}, by Young and Jea \cite{Young:1980:GCG}, by Eisenstat, Elman, and  Schultz \cite{Eisenstat:1983:VIM}, known as {\em Generalized Conjugate Residuals} (GCR),  and  by  Saad and  Schultz \cite{Saad:1986:GMRES}, known as GMRES. GMRES  also uses the Arnoldi--Gram--Schmidt process, and  is known to be the best implementation of GMR.
For Krylov subspace methods in general, see the books by Greenbaum \cite{Greenbaum:1997:IMs}, Saad \cite{Saad:2003:IMS}, and van der Vorst \cite{Vorst:2003:IKM}. The methods FOM and GMRES were also formulated in Essai
\cite{Essai:1998:WFG} in terms of
weighted inner products and norms induced by them; see also G\"{u}ttel and Pestana
\cite{Guttel:2014:SOW}.

Now, there are interesting connections between the vectors generated by FOM and GMR, and by further Krylov subspace methods, and these connections   have been explored in
  Brown \cite{Brown:1991:TCA} and Weiss \cite{Weiss:1990:CBG} originally. This topic has been   analyzed further in the papers by Gutknecht \cite{Gutknecht:1993:CNC}, \cite{Gutknecht:1997:LTS},  Weiss \cite{Weiss:1994:PGC}, Zhou and Walker \cite{Zhou:1994:RST}, Walker \cite{Walker:1995:RSP},
  Cullum and Greenbaum \cite{Cullum:1996:RBG}, and  Eiermann and Ernst \cite{Eiermann:2001:GAT}, by using weighted inner products and norms induced by them.

In view of the mathematical equivalence of MPE to FOM and of RRE to GMR
{\em when $\{\xx_m\}$ is generated from linear systems},
   the results of the present work for MPE and RRE [in particular, \eqref{eq0} and \eqref{eq00}] are precisely   those of \cite{Brown:1991:TCA} and \cite{Weiss:1990:CBG} in the presence of such $\{\xx_m\}$.
      Clearly, our results   pertaining to the relation between MPE and RRE  have a  larger scope than those pertaining to FOM and GMR    because  they apply
    to sequences obtained from nonlinear systems, as well as linear ones, while FOM and GMR apply to linear systems only. Actually, our results  apply to {\em arbitrary}  sequences  $\{\xx_m\}$, independently of how these sequences are generated.  In this sense, the connection between MPE and RRE can be viewed as being of a {\em universal} nature.
We wish to emphasize that (i)\,a priori, it cannot be assumed that MPE and RRE are related when applied to vector sequences $\{\xx_m\}$ arising from nonlinear systems, and (ii)\,in case there is a relationship, it cannot be concluded, a priori,  what form it will assume. In view of this, the fact that MPE and RRE are related as in \eqref{eq0} and \eqref{eq00}  in the presence of {\em arbitrary} sequences $\{\xx_m\}$, whether generated linearly or nonlinearly or otherwise, is quite surprising.

The purpose of this work is twofold:
\begin{enumerate}
\item
In the next section, we (i)\,redefine MPE and RRE using a weighted inner product and the norm induced by it, and (ii)\,develop a {\em unified}  algorithm for their implementation, thus also providing the   theoretical background necessary for  the rest of this work.
We note that these developments are completely new and have not been given before.
They form an  essential part of the proofs of the main results of Section~\ref{se3}. Sometimes, we will refer to the redefined MPE and RRE as {\em weighted} MPE and RRE.
 \item
 In Section \ref{se3}, we state and prove our main results showing that  MPE and RRE, as redefined in Section \ref{se2},  are closely related. Following this, in  Section~\ref{se4}, we discuss the application of our results to sequences $\{\xx_m\}$ generated from a linear nonsingular system of equations via  fixed-point iterative schemes, and show that our main results reduce to the known analogous results of \cite{Brown:1991:TCA} and \cite{Weiss:1990:CBG} that pertain to FOM and GMR, also when all four methods are defined using the same  weighted inner product and the norm induced by it.
 \end{enumerate}

 The weighted inner product $\braket{\cdot\,,\cdot}$ and the  norm $\sem\cdot $ induced by  it (both in $\C^N$) are defined as in
\beq \braket{\yy,\zz}=\yy^*\MM\zz \quad \text{and}\quad  \sem{\zz}=\sqrt{\braket{\zz,\zz}}=\sqrt{\zz^*\MM\zz},\eeq
 where $\MM\in\C^{N\times N}$  is a hermitian positive definite matrix.\footnote{Recall that the  most general inner product in $\C^N$ is the weighted inner product  that is of the form $\braket{\yy,\zz}=\yy^*\MM\zz$, $\MM$ being a hermitian positive definite matrix. Of course, in the simplest case,  $\MM=\text{diag}(\alpha_1,\ldots,\alpha_N)$ with  $\alpha_i>0\ \forall\, i$, so that  $\braket{\yy,\zz}=\sum^N_{i=1}\alpha_i\overline{y_i}\,z_i$ and $\sem{\zz}=(\sum^N_{i=1}\alpha_i|z_i|^2)^{1/2}$. Finally, when $\MM=\II$, we recover the standard Euclidean inner product and the norm induced by it.}
 The matrix $\MM$ is fixed throughout this work.

 For the standard $l_2$ (Euclidean) inner product and the vector norm induced by it, we will use the notation
\beq (\yy,\zz)=\yy^*\zz\quad \text{and}\quad \|\zz\|=\sqrt{\zz^*\zz}.\eeq

A most useful theoretical tool that makes our study of the weighted versions of MPE and RRE run smoothly  is a
generalization of the QR factorization of matrices, which  we call the {\em weighted QR factorization}. This version of the QR factorization seems to have been defined and studied in detail originally in the papers by Gulliksson and Wedin \cite{Gulliksson:1992:MQR} and Gulliksson \cite{Gulliksson:1995:MGS}. It turns out to be the most natural extension of the ordinary QR factorization when orthogonality of two vectors  $\yy,\zz\in\C^N$ is in the sense $\braket{\yy,\zz}=0$.  For convenience, we state the following theorem concerning the weighted QR factorization:
\begin{theorem}\label{th:QRfact} Let
 $$\AAA=[\,\aaa_1\,|\,\aaa_2\,|\,\cdots\,|\,\aaa_s\,]\in\mathbb{C}^{m\times
s},\quad m\geq s,\quad \text{\em rank}(\AAA)=s.$$
Let also $\GG\in\mathbb{C}^{m\times m}$ be  hermitian positive definite
and define the weighted inner product $\braket{\cdot\,,\cdot}$ via   $\braket{\yy,\zz}=\yy^*\GG\zz$.
Then there exist a
 matrix $\QQ\in\mathbb{C}^{m\times s}$, unitary in the sense that $\QQ^*\GG\QQ=\II_s$,
 and an upper
triangular matrix $\RR\in\mathbb{C}^{s\times s}$ with positive
diagonal elements, such that
$$\AAA=\QQ\RR.$$ Specifically,
$$\QQ=[\,\qq_1\,|\,\qq_2\,|\,\cdots\,|\,\qq_s\,],\quad
\RR=\begin{bmatrix}r_{11}&r_{12}&\cdots&r_{1s}\\
&r_{22}&\cdots&r_{2s}\\
&&\ddots&\vdots\\
&&&r_{ss}\end{bmatrix},$$
 $$ \braket{\qq_i,\qq_j}=\qq^*_i\GG\qq_j=\delta_{ij} \quad \forall\ i,j,$$
$$  r_{ij}=\braket{\qq_i,\aaa_j}=\qq^*_i\GG\aaa_j\quad \forall\ i\leq j;\quad r_{ii}>0\quad \forall\ i.$$
 In addition, the matrices $\QQ$ and $\RR$ are unique.
 \end{theorem}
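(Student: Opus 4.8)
The plan is to establish existence by running the Gram--Schmidt process explicitly in the weighted inner product, and then to establish uniqueness by a standard algebraic argument exploiting the upper-triangular structure together with positivity of the diagonal.

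For \textbf{existence}, I would simply verify that the recursive construction described in the paragraphs preceding the theorem does the job. Starting from $\qq_1=\aaa_1/\sem{\aaa_1}$ and, inductively, $\widetilde{\aaa}_j=\aaa_j-\sum_{i=1}^{j-1}r_{ij}\qq_i$ with $r_{ij}=\braket{\qq_i,\aaa_j}$, $r_{jj}=\sem{\widetilde{\aaa}_j}$, $\qq_j=\widetilde{\aaa}_j/r_{jj}$, one checks by induction on $j$ that $\braket{\qq_i,\qq_{i'}}=\delta_{ii'}$ for all $i,i'\le j$: the only new relations to verify are $\braket{\qq_i,\qq_j}=0$ for $i<j$, which follow by applying $\braket{\qq_i,\cdot}$ to the definition of $\widetilde{\aaa}_j$ and using the inductive orthonormality, and $\braket{\qq_j,\qq_j}=1$, which is immediate from the normalization. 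One must also note that $\widetilde{\aaa}_j\neq\00$ so that $r_{jj}>0$ is well defined: this is exactly where $\mathrm{rank}(\AAA)=s$ (i.e.\ linear independence of the $\aaa_i$) is used, since $\widetilde{\aaa}_j=\00$ would express $\aaa_j$ as a linear combination of $\qq_1,\dots,\qq_{j-1}$, hence of $\aaa_1,\dots,\aaa_{j-1}$. Collecting the relations $\aaa_j=\sum_{i=1}^{j}r_{ij}\qq_i$ into matrix form gives $\AAA=\QQ\RR$ with $\RR$ upper triangular and positive on the diagonal, and $\QQ^*\GG\QQ=\II_s$ is just the matrix encoding of $\braket{\qq_i,\qq_j}=\delta_{ij}$. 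The formula $r_{ij}=\braket{\qq_i,\aaa_j}$ for general $i\le j$ (not only for $i<j$) then follows by applying $\braket{\qq_i,\cdot}$ to $\aaa_j=\sum_{\ell\le j}r_{\ell j}\qq_\ell$.

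For \textbf{uniqueness}, suppose $\AAA=\QQ_1\RR_1=\QQ_2\RR_2$ are two such factorizations. Since $\RR_2$ is upper triangular with positive diagonal it is invertible, so $\QQ_1=\QQ_2\SS$ where $\SS=\RR_2\RR_1^{-1}$ is upper triangular with positive diagonal entries. From $\QQ_1^*\GG\QQ_1=\II_s$ and $\QQ_2^*\GG\QQ_2=\II_s$ we get $\SS^*\SS=\SS^*(\QQ_2^*\GG\QQ_2)\SS=\QQ_1^*\GG\QQ_1=\II_s$, i.e.\ $\SS$ is unitary in the ordinary sense. An upper-triangular unitary matrix is diagonal, and a diagonal unitary matrix with positive diagonal entries is the identity; hence $\SS=\II_s$, giving $\QQ_1=\QQ_2$ and then $\RR_1=\RR_2$. (The step ``upper triangular and unitary $\Rightarrow$ diagonal'' can be seen by comparing norms of columns: the first column of $\SS$ has unit Euclidean norm and only its first entry can be nonzero, so it is a unit scalar times $\ee_1$, and one proceeds down the columns.)

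The routine verifications in the existence half (the induction checking orthonormality, and assembling the matrix identity) are the only place where real computation occurs, but they are entirely standard. The one genuine point to be careful about is that the construction really uses linear independence of the $\aaa_i$ at every step to guarantee $r_{jj}>0$; this is the ``main obstacle'' only in the sense that it is the sole nontrivial hypothesis, and it is handled in one line. I expect no serious difficulty anywhere, since the weighted case is formally identical to the classical one once $\braket{\cdot\,,\cdot}$ replaces the Euclidean inner product throughout.
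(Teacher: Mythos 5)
Your proof is correct and takes essentially the same route as the paper: existence via the weighted Gram--Schmidt process (including the same use of $\mathrm{rank}(\AAA)=s$ to guarantee $\widetilde{\aaa}_j\neq\00$ and hence $r_{jj}>0$), and uniqueness by reducing to the fact that an upper triangular unitary matrix with positive diagonal entries is the identity. The only cosmetic difference is in the uniqueness step, where you form $\SS=\RR_2\RR_1^{-1}$ and verify $\SS^*\SS=\II_s$ directly, while the paper works with $\QQ_1^*\GG\QQ_2$ and first shows it is diagonal by observing that it is simultaneously upper and lower triangular.
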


Concerning the computation of $\QQ$ and $\RR$ via the Gram--Schmidt and modified
Gram--Schmidt orthogonalization, see the works mentioned above.

\section{MPE and RRE redefined using a weighted inner \\ {product}} \label{se2}
\setcounter{theorem}{0} \setcounter{equation}{0}
\subsection{General preliminaries}
Let $\{\xx_m\}$ be a vector sequence in $\C^N$. For the sake of argument, we may assume that this sequence results from the fixed-point iterative solution  of the linear or nonlinear system of equations
\beq \label{eq1} \xx=\ff(\xx),\quad \text{solution $\sss$};\quad \xx\in\C^N \quad\text{and}\quad \ff:\C^N\rightarrow \C^N,\eeq that is, from
\beq \label{eq2} \xx_{m+1}=\ff(\xx_m),\quad m=0,1,\ldots,\eeq
 $\xx_0$ being  an initial vector chosen by the user.
Normally, $N$ is  large  and $\ff(\xx)$ is a  sparse vector-valued function.
Now, when the sequence $\{\xx_m\}$ converges, it does so to the solution $\sss$, that is,
$\lim_{m\to\infty}\xx_m=\sss$. In case $\{\xx_m\}$ diverges, we call $\sss$ the antilimit  of $\{\xx_m\}$; vector extrapolation methods in general, and MPE and RRE in particular, may produce sequences of approximations that converge to $\sss$, the antilimit of $\{\xx_m\}$,  in such a case.

Let us define the vectors $\uu_i$  via
\beq \uu_i=\xx_{i+1}-\xx_i, \quad i=0,1,\ldots, \eeq
and the $N\times (k+1)$ matrices $\UU_k$ via
\beq \UU_k=[\,\uu_0\,|\,\uu_1\,|\,\cdots\,|\,\uu_k\,],\quad k=0,1,\ldots\ .\eeq
Of course, there is an integer $k_0\leq N$, such that  the matrices $\UU_k$, $k=0,1,\ldots, k_0-1$,  are of full rank, but   $\UU_{k_0}$ is not; that is,
\beq \label{eq5} \text{rank}\,(\UU_k)=k+1,\quad k=0,1,\ldots,k_0-1;\quad \text{rank}\,(\UU_{k_0})=k_0.\eeq
(Of course, this is the same as saying that $\{\uu_0,\uu_1,\ldots,\uu_{k_0-1}\}$ is a linearly independent set,
but $\{\uu_0,\uu_1,\ldots,\uu_{k_0}\}$ is not.)

Then, both MPE and RRE produce approximations $\sss_k$ (with $k\leq k_0$) to
the solution $\sss$ of \eqref{eq1} that are of the form
\beq \label{eq6}\sss_k=\sum^k_{i=0}\gamma_i\xx_i;\quad \sum^k_{i=0}\gamma_i=1,\eeq
 for some scalars $\gamma_i$. On account of the condition $\sum^k_{i=0}\gamma_i=1$, and because $\xx_i=\xx_0+\sum^{i-1}_{j=0}\uu_j$, we can rewrite \eqref{eq6}  in the form
\beq \label{eq6a}\sss_k=\xx_0+\sum^{k-1}_{j=0}\xi_j\uu_j; \quad \xi_j=\sum^k_{i=j+1}\gamma_i ,\quad
j=0,1,\ldots,k-1,\eeq which can also be expressed in matrix terms as in
\beq \label{eq6b}\sss_k=\xx_0+\UU_{k-1}\xxi,\quad \xxi=[\xi_0,\xi_1,\ldots,\xi_{k-1}]^T.\eeq
We will make use of both representations of $\sss_k$, namely, \eqref{eq6} and \eqref{eq6a}--\eqref{eq6b}, later.
The $\gamma_i$ and $\xi_j$ for MPE are, of course, different from those for RRE, in general.

In the sequel, where confusion may arise, we will denote the vectors $\sss_k$ resulting from MPE and RRE by
$\sss_{k}^\MPE$ and $\sss_{k}^\RRE$, respectively.
Similarly,  to avoid confusion, we will denote the vectors $\ggamma=[\gamma_0,\gamma_1,\ldots,\gamma_k]^T$  and $\xxi=[\xi_0,\xi_1,\ldots,\xi_{k-1}]^T$ corresponding to $\sss_k$ by $\ggamma_k$ (or $\ggamma_k^\MPE$ or $\ggamma_k^\RRE$)  and $\xxi_k$ (or $\xxi_k^\MPE$ or $\xxi_k^\RRE$), respectively, depending on the context.
When necessary, we will also denote (i)\,the $\gamma_i$  associated with $\ggamma_k$  by $\gamma_{ki}$ and (ii)\,the $\xi_j$ associated with $\xxi_k$ by $\xi_{kj}$. That is,
 $$ \ggamma_k=[\gamma_{k0},\gamma_{k1},\ldots,\gamma_{kk}]^T\quad\text{and}\quad
 \xxi_k=[\xi_{k0},\xi_{k1},\ldots,\xi_{k,k-1}]^T.$$

We now describe  how the $\gamma_i$ for MPE and RRE are determined
 when these methods are defined  within the context of $\C^N$ endowed with a weighted inner product and the norm induced by it.

\subsection{Definition of the $\gamma_i$ for MPE and RRE}

 \subsubsection{The $\gamma_i$ for MPE}
  Solve by  least squares the linear overdetermined system of equations
\beq \sum^{k-1}_{i=0}c_i\uu_i=-\uu_k \eeq
for $c_0,c_1,\ldots,c_{k-1}.$  Clearly, this system can be expressed in matrix form as in
\beq \label{eqMPE0}\UU_{k-1}\cc'=-\uu_k;\quad \cc'=[c_0,c_1,\ldots,c_{k-1}]^T,\eeq
and the least squares problem becomes
\beq \label{eqMPE1}\min_{\cc'}\sem {\UU_{k-1}\cc'+\uu_k} .\eeq
Since
$\UU_{k-1}$ has full column rank, this problem has  a unique solution for $\cc'$.  Next,  set $c_k=1$, and  compute
\beq \label{eqMPE2}\gamma_i=\frac{c_i}{\sum^{k}_{i=0}c_i},\quad i=0,1,\ldots,k,\quad
\text{provided}\quad \sum^{k}_{i=0}c_i\neq0.\eeq
   From  this, we see that $\sss_k$ for MPE exists and is unique if and only if $\sum^{k}_{i=0}c_i~\neq~0.$ (Of course, this means that $\sss_k$ for MPE may fail to exist for some $k$ in some cases.)

\subsubsection{The $\gamma_i$ for RRE}
  Solve by  least squares the linear overdetermined system of equations
\beq \sum^k_{i=0}\gamma_i\uu_i=\00, \eeq {subject to the constraint} $\sum^k_{i=0}\gamma_i=1$,
for $\gamma_0,\gamma_1,\ldots,\gamma_k.$ Clearly, this system too  can be expressed in matrix form as in
\beq \label{eqRRE0}\UU_k\ggamma=\00,\quad \ggamma=[\gamma_0,\gamma_1,\ldots,\gamma_k]^T,\eeq
and the constrained least squares problem becomes
\beq \label{eqRRE}\min_{\ggamma}\sem{\UU_k\ggamma}, \quad \text{subject to}\quad \hat{\ee}_k^T\ggamma=1; \quad \hat{\ee}_k=[1,1,\ldots,1]^T\in \C^{k+1}. \eeq
(Here $\hat{\ee}_k$ should not be confused with the $k$th standard basis vector.)
Since $\UU_k$ is of full column rank, this problem has a unique solution for $\ggamma$. From this, it is clear that $\sss_k$ for RRE exists and is unique unconditionally.

\subsection{The special case $k=k_0$}
With $\sss_{k}$ for MPE and RRE already defined, we start with a discussion of the case in which $k=k_0$.

\begin{theorem} Let $\{\xx_m\}$ be an arbitrary sequence, and let MPE and RRE be as defined above.
\begin{enumerate}
\item
 Provided $\sss_{k_0}^\MPE$ exists, we have
$\sss_{k_0}^\MPE=\sss_{k_0}^\RRE$.
\item
Assume the sequence $\{\xx_m\}$ is generated via \eqref{eq1} and \eqref{eq2} with a  linear $\ff(\xx)$, namely, with $\ff(\xx)=\TT\xx+\dd$, where $\TT\in\C^{N\times N}$
 is some constant matrix and $\dd\in\C^N$ is some constant vector, and $(\II-\TT)$ is nonsingular. Then  $\sss_{k_0}^\MPE$ exists, and there holds $\sss_{k_0}^\MPE=\sss_{k_0}^\RRE=\sss$, $\sss$ being the (unique) solution to $\xx=\TT\xx+\dd$. In this case, $k_0$ is the degree of the minimal polynomial of $\TT$ with respect to the vector $\uu_0$.
 \end{enumerate}
\end{theorem}
\begin{proof} We start by observing that the matrix $\UU_{k_0-1}$ has full rank and that the vector $\uu_{k_0}$ is a linear combination of  $\uu_0,\uu_1,\ldots,\uu_{k_0-1}$. As a result, the linear system in \eqref{eqMPE0}
is consistent,  hence
has a unique solution for $\cc'$ in the regular sense, this solution being also the solution to the minimization problem in \eqref{eqMPE1}. Letting $c_{k_0}=1$ and proceeding as in \eqref{eqMPE2}, we obtain the $\ggamma_{k_0}^\MPE$. A similar argument based on \eqref{eqRRE0} and \eqref{eqRRE} shows that $\ggamma_{k_0}^\RRE=\ggamma_{k_0}^\MPE$. This proves part 1 of the theorem.
Part 2 can be proved as in  \cite{Smith:1987:EMV}, for example.\hfill\end{proof}

Since we already know the connection between $\sss_{k_0}^\MPE$ and $\sss_{k_0}^\RRE$, in the sequel, we will consider the cases in which  $k<k_0$ strictly.

\subsection{Determination of the $\gamma_i$ via weighted QR factorization}
A numerically stable and computationally economical algorithm for computing the $\gamma_i$ for both MPE and RRE when $\MM=\II$ has been given in  Sidi \cite{Sidi:1991:EIM}.
A nice feature of this algorithm is that it proceeds via the QR factorization of the matrices $\UU_k$ and unifies the treatments of MPE and RRE. Of course, in order to accommodate the weighted inner product $\braket{\cdot\,,\cdot}$ and the norm $\sem \cdot$ induced by it, we need a different algorithm. Interestingly, an algorithm that is very similar (in fact, identical in form) to the one developed in \cite{Sidi:1991:EIM} can be formulated for this case. This can be accomplished   by  proceeding via the weighted QR factorization of $\UU_k$.
 Even though this algorithm, just as that in \cite{Sidi:1991:EIM},  is designed for {\em computational} purposes, it turns out to be very useful for the {\em theoretical} study of this work concerning the relation between MPE and RRE.  For  some of the details concerning the developments that follow next, we refer the reader to \cite{Sidi:1991:EIM}.

We start with  the weighted QR factorization of $\UU_k$.   Since
$\UU_k$ is of full column rank, by  Theorem \ref{th:QRfact}, it has a unique weighted QR factorization given as in
\beq \label{eq31}\UU_k=\QQ_k\RR_k;\quad
 \QQ_k\in \mathbb{C}^{N\times(k+1)},
 \quad
 \RR_k\in \mathbb{C}^{(k+1)\times(k+1)},
 \eeq
 where $\QQ_k$ is unitary in the sense that  $\QQ_k^*\MM\QQ_k=\II_{k+1}$ since $k<N$,  and $\RR_k$ is upper triangular with positive diagonal elements; that is,
 \beq
  \QQ_k=[\,\qq_0\,|\,\qq_1\,|\,\cdots\,|\,\qq_k\,],
   \quad
\RR_k=\begin{bmatrix}r_{00}&r_{01}&\cdots&r_{0k}\\
&r_{11}&\cdots&r_{1k}\\
&&\ddots&\vdots\\
&&&r_{kk}\end{bmatrix},\eeq
\beq \label{eq33}\qq^*_i\MM\qq_j=\delta_{ij} \quad \forall\ i,j;
 \quad r_{ij}=\qq^*_i\MM\uu_j\quad \forall\ i\leq j;\quad r_{ii}>0\quad \forall\
 i.\eeq
 (Note that, having  positive diagonal elements and being upper triangular, $\RR_k$ is also nonsingular.)
 Clearly, just as $\UU_k$ has the partitioning  $\UU_k=[\UU_{k-1}\,|\,\uu_k\,]$, $\QQ_k$ and $\RR_k$ have the partitionings
 \beq \label{eq17}\QQ_k=[\QQ_{k-1}\,|\,\qq_k] \quad \text{and}\quad
 \RR_k=\left[\begin{array}{c|c}\RR_{k-1} & \rrho_k \\ \hline
 \00^T & r_{kk}\end{array}\right],\quad \rrho_k=[r_{0k},r_{1k},\ldots,r_{k-1,k}]^T.\eeq

We will make use of the following easily verifiable lemma in the sequel:
\begin{lemma} \label{le11} Let
$$ \PP\in\C^{N\times j}\quad \text{and}\quad \PP^*\MM\PP=\II_j.$$
Then
$$ \braket{\PP\yy,\PP\zz}=\yy^*\zz=(\yy,\zz)\quad \text{and}\quad \sem{\PP\zz}=\sqrt{\zz^*\zz}=\|\zz\|.$$
\end{lemma}

By this lemma, for arbitrary $k$, we have
\beq\label{eq991} \braket{\QQ_k\yy,\QQ_k\zz}=\yy^*\zz=(\yy,\zz)\quad \text{and}\quad
\sem{\QQ_k\zz}=\sqrt{\zz^*\zz}=\|\zz\|
\eeq and
\beq \label{eq992} \sem{\UU_k\zz}=\|\RR_k\zz\|.\eeq
Of these, \eqref{eq991} follows from $\QQ_k^*\MM\QQ_k=\II_{k+1}$, while \eqref{eq992} follows from $\UU_k=\QQ_k\RR_k$ and \eqref{eq991}.

 \subsubsection{Determination of $\ggamma_k$ for MPE}
  Let us fix  $c_k=1$ and let $\cc=[c_0,c_1,\ldots,c_k]^T={\left[\begin{array}{c}\cc' \\ \hline 1\end{array}\right]}.$ Then   we have
  $$\UU_{k-1}\cc'+\uu_k=\UU_k\cc \quad \Rightarrow\quad\sem{\UU_{k-1}\cc'+\uu_k}=\sem{\UU_k\cc}=\|\RR_k\cc\|.$$
  As a result,  the minimization problem in \eqref{eqMPE1} becomes,
  $$  \min_{\cc'}\|\RR_k\cc\|. $$
  By \eqref{eq17},
  \beq\label{eq211}\RR_k\cc=
  \left[\begin{array}{c|c}\RR_{k-1} & \rrho_k \\ \hline
 \00^T & r_{kk}\end{array}\right]\left[\begin{array}{c}\cc' \\ \hline 1\end{array}\right]= \left[\begin{array}{c}\RR_{k-1}\cc'+\rrho_k \\ \hline
 r_{kk}\end{array}\right],\eeq which, upon taking norms, yields

 $$ \|\RR_k\cc\|^2=\|\RR_{k-1}\cc'+\rrho_k \|^2+r_{kk}^2.$$
 Clearly, by the fact that $\RR_{k-1}$ is a nonsingular $k\times k$ matrix, the minimum of $\|\RR_k\cc\|$ with respect to $\cc'$ is achieved when $\cc'$ satisfies
 \beq \label{eq18}\RR_{k-1}\cc'+\rrho_k=\00\quad \Rightarrow\quad\RR_{k-1}\cc'=-\rrho_k\quad \Rightarrow \quad \cc'=-\RR_{k-1}^{-1}\rrho_k. \eeq
 Note that $\cc'$ is unique, and so is $\cc$.

 With $\cc'=-\RR_{k-1}^{-1}\rrho_k$, the vector $\ggamma_k$ in MPE is obtained as in
\beq \label{eq20}\ggamma_k^\MPE=\frac{\cc}{\hat{\ee}_k^T\cc},\quad \cc=\left[\begin{array}{c}\cc' \\ \hline 1\end{array}\right]. \eeq
Of course, this is valid only when $\hat{\ee}_k^T\cc=\sum^k_{i=0}c_i\neq 0$, hence only when $\sss_k^\MPE$ exists. The vector $\cc$ exists uniquely whether $\sss_k^\MPE$ exists
or not, however.

\subsubsection{Determination of $\ggamma_k$ for RRE}
 Again by \eqref{eq992}, the minimization problem in \eqref{eqRRE} becomes
    $$\min_{\ggamma}\|\RR_k\ggamma\|, \quad \text{subject to}\quad\hat{\ee}_k^T\ggamma=1, $$ and equivalently,
    $$\min_{\ggamma}\ggamma^*(\RR_k^*\RR_k)\ggamma, \quad \text{subject to}\quad\hat{\ee}_k^T\ggamma=1. $$
    By the lemma in \cite[Appendix A]{Sidi:1991:EIM}, the solution for the vector $\ggamma_k$ in RRE proceeds through the following steps:
\beq \label{eq20a}\RR_k^*\RR_k\hh=\hat{\ee}_k, \quad \hh=[h_0,h_1,\ldots,h_k]^T\quad (\text{solve for $\hh$}).\eeq
\beq \label{eq20b}\lambda =\frac{1}{\sum^k_{i=0}h_i}=\frac{1}{\hat{\ee}_k^T\hh} \quad(\text{$\lambda>0$ always).}\eeq
\beq \label{eq20c}\ggamma_k^\RRE= \lambda \hh. \eeq
Note that  $\hh$ can be determined by solving two  $(k+1)$-dimensional triangular linear systems, namely, (i)\,$\RR_k^*\yy=\hat{\ee}_k$ for $\yy$ and (ii)\,$\RR_k\hh=\yy$ for $\hh$.

For our theoretical study, we need to have $\ggamma_k$ in analytical form. This is  achieved  as follows:
Substituting $\hh=(\RR_k^*\RR_k)^{-1}\hat{\ee}_k$ from \eqref{eq20a} in \eqref{eq20b} and \eqref{eq20c}, we have
\beq \label{eq24}\lambda=\frac{1}{\hat{\ee}_k^T(\RR_k^*\RR_k)^{-1}\hat{\ee}_k}=
\frac{1}{\|\RR_k^{-*}\hat{\ee}_k\|^2}
\eeq and
\beq \label{eq25}\ggamma_k^\RRE=\frac{(\RR_k^*\RR_k)^{-1}\hat{\ee}_k}
{\hat{\ee}_k^T(\RR_k^*\RR_k)^{-1}\hat{\ee}_k}=\frac{\RR_k^{-1}(\RR_k^{-*}\hat{\ee}_k)}
{\|\RR_k^{-*}\hat{\ee}_k\|^2}.\eeq
[Here and in the sequel, $\BB^{-*}$ stands for $(\BB^*)^{-1}=(\BB^{-1}){}^*$.]

\subsection{Unified algorithm for MPE and RRE}
Once the $\ggamma_k$ have been computed as described above, the computation of $\sss_k$ can be achieved via \eqref{eq6a}--\eqref{eq6b} as follows: First, we compute the vector $\xxi_k=[\xi_0,\xi_1,\ldots,\xi_k]^T$  via \eqref{eq6a}, and then, by invoking $\UU_{k-1}=\QQ_{k-1}\RR_{k-1}$, we compute $\sss_k$ via \eqref{eq6b}, as in
\begin{gather}\label{eq6c}
\sss_k=\xx_0+\QQ_{k-1}(\RR_{k-1}\xxi_k)=\xx_0+\sum^{k-1}_{i=0}\eta_i\qq_i; \notag \\  \eeta=\RR_{k-1}\xxi_k,\quad
\eeta=[\eta_0,\eta_1,\ldots,\eta_{k-1}]^T.\end{gather}
For convenience, we give a complete description of the unified algorithm in Table \ref{MPE-RRE}.

\begin{table*}
\caption{\label{MPE-RRE} Unified algorithm for implementing
MPE and RRE.}
\begin{center}
\fbox{\ \quad
\begin{minipage}{5.3in}
\begin{itemize}
\item [\textnormal{Step 0.}] Input: The hermitian positive definite  matrix $\MM\in\C^{N\times N}$, the integer $k$, and the  vectors
$\xx_0,\xx_{1},\ldots,\xx_{k+1}$.
 \item [\textnormal{Step 1.}]
Compute $\uu_i=\Delta \xx_i=\xx_{i+1}-\xx_i$, \  $i=0,1,\ldots,k$.\\
Set $\UU_j=[\uu_0\,|\,\uu_{1}\,|\,\cdots\,|\,\uu_{j}]\in\C^{N\times(j+1)}$, \ $j=0,1,\ldots\ .$\\
Compute the  weighted QR factorization of $\UU_{k}$, namely,
$\UU_{k}=\QQ_{k}\RR_{k}$;\\
$\QQ_k=[\qq_0\,|\,\qq_1\,|\,\cdots\,|\,\qq_k]$ unitary in the sense $\QQ_k^*\MM\QQ_k=\II_{k+1}$, and \\ $\RR_k=[r_{ij}]_{0\leq i,j \leq k}$ upper triangular, \ $r_{ij}=\qq^*_i\MM\uu_j$.\\
($\UU_{k-1}=\QQ_{k-1}\RR_{k-1}$ is contained in $\UU_{k}=\QQ_{k}\RR_{k}$.)
\item [\textnormal{Step 2.}] Computation of  $\ggamma_k=[\gamma_0,\gamma_1,\ldots,\gamma_k]^T$:
\begin{description}
\item For MPE:\\
 Solve the (upper triangular) linear system
$$\RR_{k-1}\cc'=-\rrho_k;\quad
\rrho_k=[r_{0k},r_{1k},\ldots,r_{k-1,k}]^{T}, \quad
\cc'=[c_0,c_1,\ldots,c_{k-1}]^{T}.$$
(Note that $\rrho_k=\QQ^*_{k-1}\MM\uu_{k}$.)\\
Set $c_k=1$ and compute $\alpha=\sum^k_{i=0}c_i.$\\
Set $\ggamma_k=\cc/\alpha$;  that is,   $\gamma_i=c_i/\alpha$, \ $i=0,1,\ldots,k,$ provided $\alpha\neq0$.
 \item For RRE:\\
Solve the linear system
$$\RR_k^*\RR_k\hh=\hat{\ee}_k;\quad \hh=[h_0,h_1,\ldots,h_k]^{T},\quad
\hat{\ee}_k=[1,1,\ldots,1]^{T}\in\C^{k+1}. $$
[This amounts to solving two triangular (lower and upper) systems.]\\
Set $\lambda=\big(\sum^k_{i=0}h_i\big)^{-1}.$ (Note that $\lambda$
is real
and positive.)\\
Set $\ggamma_k=\lambda \hh$; that is, $\gamma_i=\lambda h_i$, \
$i=0,1,\ldots,k.$\end{description}
 \item [\textnormal{Step 3.}] Compute
$\xxi_k=[\xi_0,\xi_1,\ldots,\xi_{k-1}]^{T}$ by
$$\xi_0=1-\gamma_0;\quad \xi_j=\xi_{j-1}-\gamma_j,\quad j=1,\ldots,k-1.$$
Compute $\sss_{k}^{\MPE}$ and $\sss_{k}^{\RRE}$ via
$$\sss_{k}=\xx_0+\QQ_{k-1}\big(\RR_{k-1}\xxi_k\big)=\xx_0+\QQ_{k-1}\eeta.$$
[For this, first compute $\eeta=\RR_{k-1}\xxi_k$,\ \
$\eeta=[\eta_0,\eta_1,\ldots, \eta_{k-1}]^T$. \\
Next, set
$\sss_{k}=\xx_0+\sum^{k-1}_{i=0}\eta_i \qq_i.$]
\end{itemize}
\end{minipage}
}
\end{center}
\end{table*}

\subsection{Error assessment}
Let us now return to the system of equations in \eqref{eq1}. If $\xx$ is an approximation to the solution $\sss$ of this system, then one good measure of the accuracy of $\xx$ is (some norm of) the residual vector $\rr(\xx)$ corresponding to $\xx$ that is given by
\beq \label{resid}\rr(\xx)=\ff(\xx)-\xx.\eeq  This is natural because $\lim_{\xx\to\sss}\rr(\xx)=\rr(\sss)=~\00$.
In case the sequence $\{\xx_m\}$ is being generated
 as in  \eqref{eq2} for solving \eqref{eq1}, our measure for the quality of $\sss_k$ will then be $\rr(\sss_k)$. The following have been shown in \cite{Sidi:1991:EIM}:

\sloppypar
\begin{itemize}
\item
 When $\ff(\xx)$ is linear [that is, $\ff(\xx)=\TT\xx+\dd$ for some constant matrix $\TT\in\C^{N\times N}$ and constant vector $\dd\in\C^N$], $\rr(\sss_k)=\UU_k\ggamma_k$ {\em exactly}.
\item
 When $\ff(\xx)$ is nonlinear, $\UU_k\ggamma_k$ serves as an {\em approximation} to $\rr(\sss_k)$, that is,  $\rr(\sss_k)\approx\UU_k\ggamma_k,$
 and  $\UU_k\ggamma_k$ gets closer and closer to $\rr(\sss_k)$ as convergence is approached.
\end{itemize}

 In addition, for both MPE and RRE, $\sem{\UU_k\ggamma_k}$, the  weighted norm of $\UU_k\ggamma_k$, can be obtained, without actually computing $\UU_k\ggamma_k$ and  taking its norm; it can be obtained very simply  in terms of the quantities already provided by the algorithm we have just described. This is the subject of the next theorem.

 \begin{theorem}\label{th:error}
 The vectors $\UU_k\ggamma_k^\MPE$ and $\UU_k\ggamma_k^\RRE$ satisfy
 \beq \label{resMPERRE}
 \sem{\UU_k\ggamma_k^\MPE}=r_{kk}|\gamma_k|\quad\text{and}\quad
 \sem{\UU_k\ggamma_k^\RRE}=\sqrt{\lambda}.\eeq
  \end{theorem}

  \noindent{\bf Remarks.} \begin{enumerate}
  \item
  Of course, $\gamma_k$ in \eqref{resMPERRE} is $\gamma^\MPE_{kk}$, namely, the last component of the vector $\ggamma_k^\MPE$  corresponding to  $\sss_k^\MPE$. Similarly, $\lambda$ in \eqref{resMPERRE}
  is as defined  in \eqref{eq20b} for $\sss_k^\RRE$.
  \item
   Clearly, \eqref{resMPERRE} is valid for {\em all} sequences $\{\xx_m\}$, whether these are generated by a (linear or nonlinear) fixed-point iterative scheme or otherwise.
  \end{enumerate}

  \begin{proof} By \eqref{eq992}, we have that $\sem{\UU_k\ggamma_k}=\|\RR_k\ggamma_k\|$. Therefore, it is enough to look at $\|{\RR_k\ggamma_k^\MPE}\|$ and $\|{\RR_k\ggamma_k^\RRE}\|$.

  For MPE,  by \eqref{eq211}, \eqref{eq18}, and \eqref{eq20}, with
  $\gamma_k=1/\hat{\ee}_k^T\cc$, we  have
  $$ \RR_k\ggamma_k^\MPE=\frac{1}{\hat{\ee}_k^T\cc}(\RR_k\cc)=
  \gamma_k\left[\begin{array}{c}\00 \\ \hline  r_{kk} \end{array}\right]=
  r_{kk} \gamma_k\left[\begin{array}{c}\00 \\ \hline  1 \end{array}\right].$$
  Taking norms on both sides,  we obtain the result for MPE.

   As for RRE, by \eqref{eq25}, we have
  $$\RR_k\ggamma_k^\RRE=
\frac{\RR_k^{-*}\hat{\ee}_k}
{\|\RR_k^{-*}\hat{\ee}_k\|^2}.$$
Taking norms on both sides, and invoking \eqref{eq24}, we obtain the result for RRE.   \hfill\end{proof}

 \section{MPE and RRE are related} \label{se3}
 \setcounter{theorem}{0} \setcounter{equation}{0}
 We now turn to the study of the relation between MPE and RRE. We do this by analyzing the vectors $\UU_k\ggamma_k$ for both methods.
We begin by restating that since
\beq \label{eq3-2}\UU_k\ggamma_k=\QQ_k(\RR_k\ggamma_k)\quad\text{and}\quad
\sem{\UU_k\ggamma_k}=\|\RR_k\ggamma_k\|,\eeq and since $\QQ_k$ and $\RR_k$ are  the same  for both MPE and RRE, the vector that is of relevance for both methods is $\RR_k\ggamma_k$, and we turn to the study of this vector. In addition, we express everything in terms of the vectors $\cc'$ and $\cc$ and the matrices $\QQ_k$ and  $\RR_k$, which do not depend either on $\sss_k^\MPE$ or $\sss_k^\RRE$.  In the developments that follow, we will also recall that
$\|\yy\|=\sqrt{\yy^*\yy}$ always.

\subsection{$\RR_k\ggamma_k$ for MPE and RRE and an identity} \label{sse31}
Assuming that $\sss_k^\MPE$ exists, hence $\hat{\ee}_k^T\cc\neq0$, by
\eqref{eq20}, we first have
$$ \RR_k\ggamma_k^\MPE=\frac{1}{\hat{\ee}_k^T\cc}\RR_k\cc,$$
which, upon invoking \eqref{eq211} and \eqref{eq18}, becomes
\beq\label{eq3-3}
\RR_k\ggamma_k^\MPE =\frac{r_{kk}}{\hat{\ee}_k^T\cc }
  \left[\begin{array}{c}\00\\ \hline 1\end{array}\right].\eeq
  Of course, this immediately implies that
\beq \label{eq3-4}\|\RR_k\ggamma_k^\MPE\|=\frac{r_{kk}}{|\hat{\ee}_k^T\cc|}.\eeq

As for RRE, by \eqref{eq25}, we have
\beq \label{eq3-5}\RR_k\ggamma_k^\RRE=
\frac{\RR_k^{-*}\hat{\ee}_k}
{\|\RR_k^{-*}\hat{\ee}_k\|^2}.\eeq
Of course, this immediately implies that

\beq \label{eq3-6}\|\RR_k\ggamma_k^\RRE\|=\frac{1}{\|\RR_k^{-*}\hat{\ee}_k\|}\quad \Rightarrow\quad
\RR_k^{-*}\hat{\ee}_k=\frac{\RR_k\ggamma_k^\RRE}{\|\RR_k\ggamma_k^\RRE\|^2}.\eeq

We now go on to study  $\RR_k^{-*}\hat{\ee}_k$ in more detail. First, by \eqref{eq17} and \eqref{eq18},
\beq \RR_k^{-1}=\left[\begin{array}{c|c}\RR_{k-1}^{-1} & \cc'/r_{kk} \\ \hline
 \00^T & 1/r_{kk}\end{array}\right]\quad \Rightarrow\quad
 \RR_k^{-*}=\left[\begin{array}{c|c}\RR_{k-1}^{-*} & \00\\ \hline
 \cc'{}^*/r_{kk}& 1/r_{kk}\end{array}\right].\eeq
  Consequently, invoking also  $\hat{\ee}_k=\left[\begin{array}{c}\hat{\ee}_{k-1}\\ \hline 1\end{array}\right]$, we have
\begin{align} \RR_k^{-*}\hat{\ee}_k&=\left[\begin{array}{c|c}\RR_{k-1}^{-*} & \00\\ \hline
 \cc'{}^*/r_{kk}& 1/r_{kk}\end{array}\right]
 \left[\begin{array} {c}\hat{\ee}_{k-1} \\
 \hline 1\end{array}\right] \notag \\
 &=
 \left[\begin{array}{c} \RR_{k-1}^{-*}\hat{\ee}_{k-1}\\
\hline \cc'{}^*\hat{\ee}_{k-1}/r_{kk}+1/r_{kk}\end{array}\right] \notag \\
&=
 \left[\begin{array}{c} \RR_{k-1}^{-*}\hat{\ee}_{k-1}\\
\hline  \overline{\hat{\ee}_{k}^T\cc}/r_{kk}\end{array}\right], \label{eq3-8a}
 \end{align}
 which, by \eqref{eq3-6},  can also be expressed as in
\beq \label{eq3-8}\frac{1} {\|\RR_k\ggamma_k^\RRE\|^2}\RR_k\ggamma_k^\RRE=\frac{1}
{\|\RR_{k-1}\ggamma_{k-1}^\RRE\|^2}
  \left[\begin{array}{c} \RR_{k-1}\ggamma_{k-1}^\RRE\\ \hline 0\end{array}\right]+
\frac{\overline{\hat{\ee}_{k}^T\cc}}{r_{kk}}\left[\begin{array}{c} \00 \\
\hline 1 \end{array}\right]. \eeq Clearly, \eqref{eq3-8} is  an {\em identity} for  RRE relating  $\sss_{k-1}^\RRE$ and $\sss_{k}^\RRE$;
 we will make use of it  in the developments of the next subsection. (Here $\overline{t}$ stands for the complex conjugate of~$t$.)

\noindent{\bf Remark.} Recall that the vector $\cc$ exists uniquely for all $k<k_0$. Thus, \eqref{eq3-8} is valid whether $\sss_k^\MPE$ exists or not.

 \subsection{Main results}

 The following theorem is our first main result, and  concerns the case in which $\sss_k^\MPE$ does not exist and RRE stagnates.
 \begin{theorem} \label{th1}
 \begin{enumerate} \item
 In case $\sss_k^\MPE$ does not exist,  there holds
 \beq \label{eq3-10}\sss_k^\RRE=\sss_{k-1}^\RRE,\eeq

 which also implies
  \beq \label{eq3-10c}\UU_k\ggamma_k^\RRE=\UU_{k-1}\ggamma_{k-1}^\RRE. \eeq
 \item
 Conversely, if \eqref{eq3-10} holds, then $\sss_k^\MPE$ does not exist.
 \end{enumerate}
 \end{theorem}
 \begin{proof} The proof is based on the fact that $\sss_k^\MPE$ exists if and only if
 $\hat{\ee}_{k}^T{\cc}\neq0$.

 \noindent{\em Proof of part 1:} Since $\hat{\ee}_{k}^T{\cc}=0$ when   $\sss_k^\MPE$ does not exist,
 by \eqref{eq3-8},
 \beq \label{eq3-11}\frac{1} {\|\RR_k\ggamma_k^\RRE\|^2}\RR_k\ggamma_k^\RRE=\frac{1}
{\|\RR_{k-1}\ggamma_{k-1}^\RRE\|^2}
  \left[\begin{array}{c} \RR_{k-1}\ggamma_{k-1}^\RRE\\ \hline 0\end{array}\right].\eeq
  Taking Euclidean norms in \eqref{eq3-11}, we obtain
\beq \label{eq3-12}\|\RR_k\ggamma_k^\RRE\|=\|\RR_{k-1}\ggamma_{k-1}^\RRE\|,\eeq
which, upon substituting back in \eqref{eq3-11}, gives
\beq \label{eq3-9}
\RR_k\ggamma_k^\RRE=\left[\begin{array}{c} \RR_{k-1}\ggamma_{k-1}^\RRE \\
\hline 0\end{array}\right]=\RR_k\left[\begin{array}{c} \ggamma_{k-1}^\RRE \\
\hline 0\end{array}\right].  \eeq
By  the fact that  $\RR_k$ is nonsingular, it follows that
\beq \label{eq3-15}\ggamma_k^\RRE=\left[\begin{array}{c}\ggamma_{k-1}^\RRE \\
\hline 0\end{array}\right],\eeq which, together with \eqref{eq6}, gives \eqref{eq3-10}.
\medskip

\noindent{\em Proof of part 2:} By \eqref{eq3-10} and \eqref{eq6b},
we have
\beq \sss_k^\RRE=\xx_0+\UU_{k-1}\xxi_k^\RRE=\xx_0+\UU_{k-2}\xxi_{k-1}^\RRE=
 \sss_{k-1}^\RRE,\eeq from which
 \beq  \label{eq3-18a} \UU_{k-1}\xxi_k^\RRE=\UU_{k-2}\xxi_{k-1}^\RRE\quad \Rightarrow\quad
 \UU_{k-1}\xxi_k^\RRE=\UU_{k-1} \left[\begin{array}{c}\xxi_{k-1}^\RRE\\ \hline 0\end{array}\right].\eeq
By the fact that  $\UU_{k-1}$ is of full column rank, \eqref{eq3-18a} implies that
\beq \xxi_k^\RRE=\left[\begin{array}{c}\xxi_{k-1}^\RRE\\ \hline 0\end{array}\right],\eeq
which, when combined with the relation [$\xi_{kj}=\sum^k_{i=j+1}\gamma_{ki}$, by which,
$\xi_{k,k-1}=\gamma_{kk}$]  in \eqref{eq6a}, gives
\eqref{eq3-15}.
Multiplying both sides of \eqref{eq3-15} on the left by $\RR_k$, we obtain
\beq  \label{eq3-20a}\RR_k\ggamma_k^\RRE=\left[\begin{array}{c}\RR_{k-1}\ggamma_{k-1}^\RRE\\ \hline 0\end{array}\right]\quad \Rightarrow\quad \|\RR_k\ggamma_k^\RRE\|=\|\RR_{k-1}\ggamma_{k-1}^\RRE\|.\eeq
Substituting \eqref{eq3-20a} in \eqref{eq3-8}, we obtain $\hat{\ee}_k^T\cc=0$, and this completes the proof. \end{proof}

\noindent{\bf Remark.} What Theorem \ref{th1} is saying is that the stagnation of RRE (in the sense that $\sss_k^\RRE=\sss_{k-1}^\RRE$) and the failure of $\sss_k^\MPE$ to exist take place simultaneously. In addition, this phenomenon  is of a universal nature because it is independent of how the sequence $\{\xx_m\}$ is generated.\\

 The next theorem is our second main result, and concerns the general case in which $\sss_k^\MPE$ exists.

 \begin{theorem} \label{th2}
 In case $\sss_k^\MPE$ exists,  there hold
 \beq \label{eq3-16}
 \frac{1}{\sem{\UU_k\ggamma_k^\RRE}^2}=\frac{1}{\sem{\UU_{k-1}\ggamma_{k-1}^\RRE}^2}+\frac{1}{\sem{\UU_k\ggamma_k^\MPE}^2}\eeq
and
 \beq \label{eq3-17}
 \frac{\UU_k\ggamma_k^\RRE}{\sem{\UU_k\ggamma_k^\RRE}^2}=
 \frac{\UU_{k-1}\ggamma_{k-1}^\RRE}{\sem{\UU_{k-1}\ggamma_{k-1}^\RRE}^2}
 +\frac{\UU_k\ggamma_k^\MPE}{\sem{\UU_k\ggamma_k^\MPE}^2}.
 \eeq
 Consequently, we also have
 \beq \label{eq3-18}
 \frac{\sss_k^\RRE}{\sem{\UU_k\ggamma_k^\RRE}^2}=
 \frac{\sss_{k-1}^\RRE}{\sem{\UU_{k-1}\ggamma_{k-1}^\RRE}^2}
 +\frac{\sss_k^\MPE}{\sem{\UU_k\ggamma_k^\MPE}^2}.
 \eeq
 In addition,
 \beq\label{eq3-55} \sem{\UU_k\ggamma_k^\RRE}<\sem{\UU_{k-1}\ggamma_{k-1}^\RRE}.\eeq

 \end{theorem}
 \begin{proof} Since $\sss_k^\MPE$ exists, we have $\hat{\ee}_{k}^T{\cc}\neq0$.
 Taking the Euclidean  norm of both sides in \eqref{eq3-8}, and observing that the two terms on the right-hand side are orthogonal to each other in the Euclidean  inner product, we first obtain
 \beq \label{eq-19}
 \frac{1}{\|\RR_k\ggamma_k^\RRE\|^2}=\frac{1}{\|\RR_{k-1}\ggamma_{k-1}^\RRE\|^2}+
 \bigg(\frac{|\hat{\ee}_k^T\cc|}{r_{kk}}\bigg)^2,\eeq
  which, upon invoking \eqref{eq3-4}, gives
 \beq \label{eq3-20}
 \frac{1}{\|\RR_k\ggamma_k^\RRE\|^2}=\frac{1}{\|\RR_{k-1}\ggamma_{k-1}^\RRE\|^2}+
 \frac{1}{\|\RR_k\ggamma_k^\MPE\|^2}.\eeq
 The result in   \eqref{eq3-16} follows from \eqref{eq3-20} and \eqref{eq3-2}.

 Next, invoking \eqref{eq3-3} and \eqref{eq3-4} in  \eqref{eq3-8}, we obtain
\beq \label{eq3-22}\frac{1}{\|\RR_k\ggamma_k^\RRE\|^2}\RR_k\ggamma_k^\RRE=
\frac{1}{\|\RR_{k-1}\ggamma_{k-1}^\RRE\|^2}\left[\begin{array}{c} \RR_{k-1}\ggamma_{k-1}^\RRE\\
\hline 0\end{array}\right]
+\frac{1}{\|\RR_k\ggamma_k^\MPE\|^2}\RR_k\ggamma_k^\MPE.\eeq
Multiplying both sides of \eqref{eq3-22} on the left by $\QQ_k$,  and invoking \eqref{eq3-2} and
\beq \QQ_k\left[\begin{array}{c} \RR_{k-1}\ggamma_{k-1}^\RRE \\
\hline 0\end{array}\right]=[\,\QQ_{k-1}\,|\,\qq_k\,]\left[\begin{array}{c} \RR_{k-1}\ggamma_{k-1}^\RRE \\
\hline 0\end{array}\right] =\QQ_{k-1}(\RR_{k-1}\ggamma_{k-1}^\RRE)=\UU_{k-1}\ggamma_{k-1}^\RRE,
\eeq
 we  obtain \eqref{eq3-17}.

 Let us rewrite \eqref{eq3-17} in the form
 \beq \label{eq3-23}
\frac{1}{\sem{\UU_k\ggamma_k^\RRE}^2}\UU_k\ggamma_k^\RRE=
 \frac{1}{\sem{\UU_{k-1}\ggamma_{k-1}^\RRE}^2}\UU_{k}
 \left[\begin{array}{c}\ggamma_{k-1}^\RRE\\ \hline 0 \end{array}\right]
 +\frac{1}{\sem{\UU_k\ggamma_k^\MPE}^2}\UU_k\ggamma_k^\MPE.
 \eeq
From \eqref{eq3-23} and by the fact that  $\UU_k$ is of full column rank, it follows that
\beq
\frac{1}{\sem{\UU_k\ggamma_k^\RRE}^2}\ggamma_k^\RRE=
 \frac{1}{\sem{\UU_{k-1}\ggamma_{k-1}^\RRE}^2}
 \left[\begin{array}{c}\ggamma_{k-1}^\RRE\\ \hline 0 \end{array}\right]
 +\frac{1}{\sem{\UU_k\ggamma_k^\MPE}^2}\ggamma_k^\MPE,\eeq
 and this, together with \eqref{eq6}, gives \eqref{eq3-18}.

 Finally, \eqref{eq3-55} follows directly from \eqref{eq3-16}.
  \end{proof}

 The following facts can be deduced directly from \eqref{eq3-16}:
\beq \label{eq91}\sem{\UU_k\ggamma_k^\MPE}=\frac{\sem{\UU_k\ggamma_k^\RRE}}
{\sqrt{1-(\sem{\UU_k\ggamma_k^\RRE}/\sem{\UU_{k-1}\ggamma_{k-1}^\RRE})^2}}\quad
\text{when $\sss_k^\MPE$ exists}.\eeq

\beq \label{eq92}
 \frac{1}{\sem{\UU_k\ggamma_k^\RRE}^2}= \sum_{i\in S_k}\frac{1}{\sem{\UU_i\ggamma_i^\MPE}^2};\quad S_k=\{0\leq i\leq k:\
\sss_i^\MPE\ \text{exists}\}.\eeq

\subsection{Implications of Theorems \ref{th1} and \ref{th2}}
Let us go back to the case in which $\{\xx_m\}$ is generated as in $\xx_{m+1}=\ff(\xx_m)$, $m=0,1,\ldots,$ from the  system $\xx =\ff(\xx)$. As we have already noted, with the residual associated with  an arbitrary vector $\xx$ defined as $\rr(\xx)=\ff(\xx)-\xx$, (i)\,$\UU_k\ggamma_k=\rr(\sss_k)$ when $\ff(\xx)$ is linear, and (ii)\,$\UU_k\ggamma_k\approx\rr(\sss_k)$  when $\ff(x)$ is nonlinear and $\sss_k$ is close to the solution $\sss$ of $\xx=\ff(\xx)$.  Then, Theorem \ref{th2} [especially \eqref{eq91}] implies that the convergence behaviors of MPE and RRE are interrelated in the following sense:  MPE and RRE either converge  well simultaneously or perform poorly simultaneously.
Letting $\phi_k^\MPE=\sem{\UU_k\ggamma_k^\MPE}$ and $\phi_k^\RRE=\sem{\UU_k\ggamma_k^\RRE}$, and recalling  that $\phi_k^\RRE/\phi_{k-1}^\RRE~\leq~1$ for all $k$, we have the following:
(i)\, When $\phi_k^\RRE/\phi_{k-1}^\RRE$ is significantly smaller than~$1$, which means that RRE is performing well,
$\phi_k^\MPE$ is close to $\phi_k^\RRE$, that is, MPE is performing well too,  and (ii)\,when $\phi_k^\MPE$ is  increasing, that is, MPE is performing poorly, $\phi_k^\RRE/\phi_{k-1}^\RRE$ is approaching $1$, that is, RRE is performing poorly too. Thus, when the graph of $\phi_k^\MPE$ has a peak for $\tilde{k}_1\leq k\leq \tilde{k}_2$, then the graph of $\phi_k^\RRE$  has a plateau for $\tilde{k}_1\leq k\leq \tilde{k}_2$.
This is known as the {\em peak-plateau} phenomenon in the context of Krylov subspace methods for linear systems.

\section{Connection with Krylov subspace methods and concluding remarks} \label{se4}
\setcounter{theorem}{0} \setcounter{equation}{0}
\subsection{MPE and RRE on linear systems}
Consider again the linear system of equations $\xx=\TT\xx+\dd$, where the matrix $(\II-\TT)$
is nonsingular, and generate $\{\xx_m\}$ via $\xx_{m+1}=\TT\xx_m+\dd$, $m=0,1,\ldots,$ with some initial vector $\xx_0$. Apply MPE and RRE to $\{\xx_m\}$ to obtain the vectors $\sss_k$ as before. As
already stated, $\UU_k\ggamma_k=\rr_k=\rr(\sss_k)$, where $\rr(\xx)=
(\TT\xx+\dd)-\xx$ is the residual vector for the system $(\II-\TT)\xx=\dd$ associated with $\xx$.
In this case, we have the next theorem as  a corollary of  Theorems \ref{th1} and \ref{th2}:

\begin{theorem}\label{th3} Let  the  sequence $\{\xx_m\}$ be generated recursively via $\xx_{m+1}=\TT\xx_m+\dd$, $m=0,1,\ldots,$ the matrix $(\II-\TT)$ being nonsingular.
Let also $\rr(\xx)=\TT\xx+\dd-\xx$ be the residual vector corresponding to $\xx$. Let $k_0$ be the degree of the minimal polynomial of $\TT$ with respect to $\uu_0=\xx_1-\xx_0$. Then, for $k<k_0$,
the vectors $\sss_k^\MPE$ and $\sss_k^\RRE$ obtained by applying MPE and RRE to $\{\xx_m\}$ and their residual vectors $\rr(\sss_k^\MPE)=\rr_k^\MPE$ and
$\rr(\sss_k^\RRE)=\rr_k^\RRE$ satisfy the following for this special case:
\begin{enumerate}
\item $\sss_k^\RRE=\sss_{k-1}^\RRE$ if and only if $\sss_k^\MPE$ fails to exist.
\item In case $\sss_k^\MPE$ exists, there hold
\beq \label{eq222}
\frac{1}{\sem{\rr_k^\RRE}^2}=
 \frac{1}{\sem{\rr_{k-1}^\RRE}^2}
 +\frac{1}{\sem{\rr_k^\MPE}^2}.
 \eeq

\beq \label{eq223}
\frac{\rr_k^\RRE}{\sem{\rr_k^\RRE}^2}=
 \frac{\rr_{k-1}^\RRE}{\sem{\rr_{k-1}^\RRE}^2}
 +\frac{\rr_k^\MPE}{\sem{\rr_k^\MPE}^2}.
 \eeq
 Consequently, we also have
 \beq \label{eq224}
 \frac{\sss_k^\RRE}{\sem{\rr_k^\RRE}^2}=
 \frac{\sss_{k-1}^\RRE}{\sem{\rr_{k-1}^\RRE}^2}
 +\frac{\sss_k^\MPE}{\sem{\rr_k^\MPE}^2}.
 \eeq
 In addition,
 \beq\label{eq225} \sem{\rr_k^\RRE}<\sem{\rr_{k-1}^\RRE}.\eeq
\item $\sss_{k_0}^\MPE=\sss_{k_0}^\RRE=\sss$, where $\sss$ is the solution to $(\II-\TT)\xx=\dd$.
\end{enumerate}
\end{theorem}

In view of \eqref{eq222}, the results in \eqref{eq91} and \eqref{eq92} become
\beq \label{eq911}\sem{\rr_k^\MPE}=\frac{\sem{\rr_k^\RRE}}
{\sqrt{1-(\sem{\rr_k^\RRE}/\sem{\rr_{k-1}^\RRE})^2}}
\quad
\text{when $\sss_k^\MPE$ exists}\eeq and
\beq \label{eq922}
 \frac{1}{\sem{\rr_k^\RRE}^2}= \sum_{i\in S_k}\frac{1}{\sem{\rr_i^\MPE}^2};\quad S_k=\{0\leq i\leq k:\
\sss_i^\MPE\ \text{exists}\}.\eeq

\subsection{Equivalence of redefined  MPE and RRE to Krylov subspace methods for linear systems}
 Theorem 2.4 in  \cite{Sidi:1988:EVP} concerns  the mathematical equivalence of
 vector extrapolation methods to Krylov subspace methods, when all these methods are defined
 using the standard Euclidean  inner product $(\cdot\,,\cdot)$ and the standard  norm $\|\cdot\|$ induced by $(\cdot\,,\cdot)$:  This theorem states specifically that
MPE  and RRE are equivalent  to, respectively,  the  full orthogonalization method (FOM) of Arnoldi and  the method of generalized minimal residuals (GMR) when
\begin{itemize}\item
MPE and RRE  are being applied to the sequence $\{\xx_m\}$ obtained via $\xx_{m+1}=\TT\xx_m+\dd$, $m=0,1,\ldots,$ with some $\xx_0$,   and
 \item
 FOM and GMR are being applied to $(\II-\TT)\xx=\dd$, starting with the same initial vector $\xx_0$.
     \end{itemize}
    As  stated in Theorem \ref{th55} below, this theorem holds true also when MPE, RRE, FOM, and GMR are defined using the weighted inner product $\braket{\cdot\,,\cdot}$ and the weighted norm  $\sem{\cdot}$ induced by
    $\braket{\cdot\,,\cdot}$. In the next paragraph, we state these definitions of
FOM and GMR.

For  a nonsingular linear system $\AAA\xx=\bb$, whose solution we denote by $\sss$, FOM  and GMR construct  their approximations $\ww_k$ to $\sss$ as follows:
Define the residual vector  corresponding to $\xx$ by $\rr(\xx)=\bb-\AAA\xx$ and denote
$\rr_0=\rr(\xx_0)$ for some initial vector $\xx_0$. Let ${\cal K}_k(\AAA;\rr_0)=\text{span}\{\rr_0,\AAA\rr_0,\ldots, \AAA^{k-1}\rr_0\}$. Then, for each method, the approximation $\ww_k$ to $\sss$ is of the form $\ww_k=\xx_0+\yy$ such that $\yy\in{\cal K}_k(\AAA;\rr_0)$,
and $\yy$ is the vector to be determined. Using the weighted inner product $\braket{\cdot\,,\cdot}$ and  the norm $\sem{\cdot}$ induced by it, these methods can be redefined as follows: \begin{itemize}
 \item
 For FOM, $\yy$ is determined by requiring that $\braket{\zz,\rr_k^\FOM}=0$ for all $\zz\in {\cal K}_k(\AAA;\rr_0)$, where $\rr_k^\FOM=\rr(\ww_k^\FOM)$.
 \item
For GMR, $\yy$ is determined by requiring that $\sem{\rr_k^\GMR}=
\min_{\yy\in{\cal K}_k(\AAA;\rr_0)}\sem{\rr(\xx_0+\yy)}$, where $\rr_k^\GMR=\rr(\ww_k^\GMR)$.
\end{itemize}
Then we have the following generalization of Theorem 2.4 in \cite{Sidi:1988:EVP}:

\begin{theorem} \label{th55}
Consider the nonsingular linear system $(\II-\TT)\xx=\dd$. Apply FOM and GMR to this system starting with some initial vector $\xx_0$. Apply MPE and RRE to the sequence $\{\xx_m\}$ obtained from $\xx_{m+1}=\TT\xx_m+\dd$, $m=0,1,\ldots,$ with the same initial vector $\xx_0$. Then
\beq \ww_k^\FOM=\sss_k^\MPE\quad\text{and}\quad \ww_k^\GMR=\sss_k^\RRE,\eeq
when all four methods are defined using the same weighted inner product $\braket{\cdot\,,\cdot}$ and the norm $\sem{\cdot}$ induced by it. Consequently, all of the results of Theorem \ref{th3} apply  verbatim to the vectors
$\ww_k^\FOM$ and $\ww_k^\GMR$.
\end{theorem}
\begin{proof} The same as that of \cite[Theorem 2.4]{Sidi:1988:EVP}.
\end{proof}

In view of Theorem \ref{th55}, Theorem \ref{th3} holds verbatim with  $\sss_k^\MPE$, $\rr_k^\MPE$ and $\sss_k^\RRE$, $\rr_k^\RRE$ there replaced by $\ww_k^\FOM$, $\rr_k^\FOM$  and
$\ww_k^\GMR$, $\rr_k^\GMR$, respectively. Of course, these  results for FOM and GMR are not new. As already mentioned,  they  were given originally by Weiss \cite{Weiss:1990:CBG} and by Brown \cite{Brown:1991:TCA}, and developed further in the papers mentioned in Section~\ref{se1}.

Note that the vectors $\ww_k^\FOM$  and $\ww_k^\GMR$ can be obtained numerically by modifying the known algorithms for FOM and GMR such that the Euclidean inner product and the associated  norm are replaced by a weighted
inner product and the associated  norm. This is precisely what is done  in the paper by Essai \cite{Essai:1998:WFG}, which was mentioned in Section \ref{se1}.


\end{document}